\def\input@path{{corps/}{styles/}}\makeatother
\numberwithin{equation}{section}
\begin{document}
\pagenumbering{arabic} 
\setcounter{page}{1}
\title{Generalization of a going-down theorem in the category of Chow-Grothendieck motives due to N. Karpenko}
\author{Charles De Clercq}

\maketitle

\begin{abstract}Let $\mathbb{M}:=(M(X),p)$ be a direct summand of the motive associated with a geometrically split, geometrically variety over a field $F$ satisfying the nilpotence principle. We show that under some conditions on an extension $E/F$, if $\mathbb{M}$ is a direct summand of another motive $M$ over an extension $E$, then $\mathbb{M}$ is a direct summand of $M$ over $F$.
\end{abstract}

\section{Introduction}
Let $\Lambda$ be a finite commutative ring. Our main reference on the category $CM(F;\Lambda)$ of Chow-Grothendieck motives with coefficients in $\Lambda$ is \cite{EKM}.

The purpose of this note is to generalize the folowing theorem due to N. Karpenko (\cite{hhfba}, proposition 4.5). Throughout this paper we understand a $F$-variety over a field $F$ as a separated scheme of finite type over $F$.

\begin{theoreme}\label{4.5}Let $\Lambda$ be a finite commutative ring. Let $X$ be a geometrically split, geometrically irreducible $F$-variety satisfying the nilpotence principle. Let $M\in CM(F;\Lambda)$ be another motive. Suppose that an extension $E/F$ satisfies
\begin{enumerate}
\item the $E$-motive $M(X)_E\in CM(E;\Lambda)$ of the $E$-variety $X_E$ is indecomposable;
\item the extension $E(X)/F(X)$ is purely transcendental;
\item the motive $M(X)_E$ is a direct summand of the motive $M$.
\end{enumerate}
Then the motive $M(X)$ is a direct summand of the motive $M$.
\end{theoreme}

We generalize this theorem when the motive $M(X)\in CM(F;\Lambda)$ is replaced by a direct summand $(M(X),p)$ associated with a projector $p\in End_{CM(F;\Lambda)}(M(X))$. The proof given by N. Karpenko in \cite{hhfba} cannot be used in the case where $M(X)$ is replaced by a direct summand because of the use on the \emph{multiplicity} (\cite{EKM}, §75) as the multiplicity of a projector in the category $CM(F;\Lambda)$ is not always equal to $1$ (and it can even be $0$). The proof given here for its generalization gives also another proof of theorem \ref{4.5}.

\section{Suitable basis of the dual module of a geometrically split $F$-variety}

Let $X$ be a geometrically split, geometrically irreductible $F$-variety satisfying the nilpotence principle. We note $CH(\overline{X};\Lambda)$ as the colimit of the $CH(X_{K};\Lambda)$ over all extensions $K$ of $F$. By assumption there is a integer $n=rk(X)$ such that $$CH(\overline{X};\Lambda)\simeq\bigoplus_{i=0}^n\Lambda.$$ Let $(x_i)_{i=0}^n$ be a base of the $\Lambda$-module $CH(\overline{X};\Lambda)$. Each element $x_i$ of the basis is associated with a subvariety of $X_E$, where $E$ is a splitting field of $X$. We note $\varphi(i)$ for the dimension of the $E$-variety associated to $x_i$.

\begin{proposition}Let $X$ be a geometrically split $F$-variety. Then the pairing
$$\fonction{\Psi}{CH(\overline{X};\Lambda)\times CH(\overline{X};\Lambda)}{\Lambda}{(\alpha,\beta)}{deg(\alpha\cdot\beta)}$$
is bilinear, symetric and non-degenerate.
\end{proposition}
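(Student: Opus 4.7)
Bilinearity of $\Psi$ is immediate from the bilinearity of the intersection product on $CH(\overline{X};\Lambda)$ and the $\Lambda$-linearity of the degree map, while symmetry of $\Psi$ amounts to the commutativity of that product. I would dispose of these two properties in a single line each.

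For non-degeneracy, the natural move is to fix a splitting field $E/F$ of $X$. By definition of geometric splitness, the motive $M(X_E)$ decomposes in $CM(E;\Lambda)$ as a direct sum $\bigoplus_{i=0}^{n}\Lambda(\varphi(i))$, and this decomposition is stable under any further base change $K/E$, so the canonical map $CH(X_E;\Lambda)\to CH(\overline{X};\Lambda)$ is an isomorphism. Under it, the basis $(x_i)$ corresponds to the canonical generators of the Chow groups of the Tate summands, with $x_i$ placed in dimension $\varphi(i)$, equivalently in codimension $\dim X-\varphi(i)$. Consequently $\Psi$ is graded: $\deg(x_i\cdot x_j)$ can be nonzero only when $\varphi(i)+\varphi(j)=\dim X$, so the problem reduces to checking that the induced pairing between the piece of codimension $k$ and the piece of codimension $\dim X-k$ is perfect for every $k$.

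The real content is the self-duality of $M(X_E)$. The graph of the diagonal on the smooth projective variety $X_E$ yields a Poincar\'e duality isomorphism $M(X_E)^{\vee}\simeq M(X_E)(-\dim X)$, which forces the multiset $\{\varphi(i)\}_{i=0}^{n}$ to be stable under the involution $d\mapsto\dim X-d$. Reading this isomorphism back through the identification of the previous paragraph produces an involution $i\mapsto i^{*}$ of $\{0,\ldots,n\}$ and a choice of representatives of the basis $(x_i)$ such that $\deg(x_i\cdot x_{i^{*}})\in\Lambda^{\times}$ and $\deg(x_i\cdot x_j)=0$ whenever $j\neq i^{*}$. The Gram matrix $\bigl(\deg(x_i\cdot x_j)\bigr)_{i,j}$ is then a monomial matrix, hence invertible, which is exactly non-degeneracy of $\Psi$.

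The only non-formal step is this Poincar\'e duality input; this is where the smoothness and projectivity implicit in the standing hypothesis of the section enter, and it is also what will produce, in the next paragraphs, the \emph{suitable} (Poincar\'e-dual) basis of the title. I would cite EKM, \S 66, rather than re-derive the self-duality from scratch.
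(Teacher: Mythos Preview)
The paper states this proposition without proof, so there is no argument on that side to compare with.

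Your handling of bilinearity and symmetry is fine, and singling out Poincar\'e duality as the source of non-degeneracy is exactly right: over a splitting field the diagonal class furnishes an isomorphism $M(X_E)\simeq M(X_E)^{\vee}(\dim X)$, and the matrix of this isomorphism with respect to the Tate--summand bases is precisely the Gram matrix of $\Psi$, hence invertible over $\Lambda$. That already gives non-degeneracy, which is all the proposition asserts, and your reference to EKM for the duality input is appropriate.

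Where you go too far is the claim that the duality isomorphism lets you choose the basis so that the Gram matrix becomes a \emph{monomial} matrix with unit nonzero entries. In complementary codimensions $k\neq\dim X-k$ this is harmless, since one may replace the basis in codimension $\dim X-k$ by the dual of the one in codimension $k$. But if $\dim X$ is even, the restriction of $\Psi$ to the middle piece $CH^{\dim X/2}(\overline{X};\Lambda)$ is a non-degenerate symmetric bilinear form on a free $\Lambda$-module, and over a general finite commutative ring such a form need not be diagonalisable or hyperbolic. For instance, over $\Lambda=\mathbb{Z}/4\mathbb{Z}$ the unimodular form with matrix $\left(\begin{smallmatrix}2&1\\1&2\end{smallmatrix}\right)$ has only even values and no primitive isotropic vector, so it admits no basis of the shape you describe; your argument offers no geometric reason excluding such a form. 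This does not damage the proposition itself, but it does mean that the dual basis $(x_i^{\ast})$ constructed immediately after the proposition is obtained by inverting the full Gram matrix, not by permuting and rescaling the $(x_i)$. Drop the monomial claim and stop at ``the Gram matrix is invertible''.
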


The pairing $\Psi$ induces an isomorphism between $CH(\overline{X};\Lambda)$ and its dual module $Hom_{\Lambda}(CH(\overline{X};\Lambda),\Lambda)$. This isomorphism is given by
$$\fonction{}{CH(\overline{X};\Lambda)}{Hom_{\Lambda}(CH(\overline{X};\Lambda),\Lambda}{x}{\Psi(x,\cdot)}$$

Considering the inverse images of the dual basis of $Hom_{\Lambda}(CH(\overline{X},\Lambda);\Lambda)$ associated with the basis $x_i$, we get another basis $(x_i^{\ast})_{i=0}^n$ of $CH(\overline{X};\Lambda)$ such that $$\Psi(x_i,x_j^{\ast})=\delta_{ij}$$ where $\delta_{ij}$ is the usual Kronecker symbol.

\begin{proposition}\label{64.3}Let $M$ and $N$ be two motives in $CM(F;\Lambda)$ such that $M$ is split. Then there is an isomorphism
$$CH^{\ast}(M;\Lambda)\otimes CH^{\ast}(N;\Lambda)\longrightarrow CH^{\ast}(M\otimes N;\Lambda)$$
\end{proposition}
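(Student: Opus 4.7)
The plan is to reduce the statement to the case where $M$ is a single Tate motive $\Lambda(i)$, exploiting the hypothesis that $M$ splits as a finite direct sum of such motives in $CM(F;\Lambda)$.

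First, I would construct the natural map. For any two objects $M=(X,p,n)$ and $N=(Y,q,m)$ in $CM(F;\Lambda)$, the exterior product of cycles
$$CH^{\ast-n}(X;\Lambda)\otimes CH^{\ast-m}(Y;\Lambda) \longrightarrow CH^{\ast-n-m}(X\times Y;\Lambda)$$
commutes with the action of the product projector $p\times q$ and hence descends to a morphism
$$\Phi_{M,N}\colon CH^{\ast}(M;\Lambda)\otimes CH^{\ast}(N;\Lambda)\longrightarrow CH^{\ast}(M\otimes N;\Lambda).$$
This construction is functorial in both variables, and in particular sends finite direct sum decompositions of $M$ to direct sum decompositions of source and target compatibly with $\Phi_{M,N}$.

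Second, using the splitting hypothesis, I would fix an isomorphism $M\simeq\bigoplus_{j=1}^{r}\Lambda(i_j)$ in $CM(F;\Lambda)$. By the additivity just noted, it suffices to show that $\Phi_{\Lambda(i),N}$ is an isomorphism for every $i\in\mathbb{Z}$. For this one uses that $CH^{\ast}(\Lambda(i);\Lambda)$ is the free $\Lambda$-module of rank one placed in codimension $i$, while $\Lambda(i)\otimes N$ is the Tate twist $N(i)$, whose Chow group is $CH^{\ast-i}(N;\Lambda)$. Unwinding the construction of $\Phi$ in this particular case identifies it with the tautological isomorphism $\Lambda\otimes_{\Lambda}CH^{\ast-i}(N;\Lambda)\to CH^{\ast-i}(N;\Lambda)$, which closes the argument.

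The only point requiring genuine care is the first one, namely checking that the exterior product is compatible with the product projector so that $\Phi_{M,N}$ is well defined at the level of motivic summands. This reduces to a routine verification using the composition law of correspondences (together with the projection formula), but must be written out explicitly. Once past this bookkeeping step, the remainder of the proof is purely formal.
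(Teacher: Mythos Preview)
Your argument is correct and is essentially the standard one. Note, however, that the paper does not actually prove this proposition: its entire proof reads ``c.f.\ \cite{EKM} proposition 64.3.'' Your reduction to Tate motives via additivity, together with the tautological identification for $\Lambda(i)\otimes N\simeq N(i)$, is precisely the argument one finds in that reference, so there is nothing to contrast.
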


\begin{proof}c.f. \cite{EKM} proposition 64.3.
\end{proof}

Let $Y$ be a smooth complete irreducible $F$-variety. We note $M$ for the motive $(M(Y),q)$ associated with a projector $q\in End(M(Y))$. Then we have the following computations.

\begin{lemme}\label{calculs}For any integers $i$, $j$, $k$ and $s$ less than $rk(X)=n$, and for any cycles $y$ and $y'$ in $CH(\overline{Y};\Lambda)$, with
$1$ being the identity class in either $CH(\overline{X};\Lambda)$ or $CH(\overline{Y};\Lambda)$ we have
\begin{enumerate}
\item $(x_i\times x_j^{\ast})\circ (x_k\times x_s^{\ast})=\delta_{is} (x_k\times x_j^{\ast})$
\item $(x_i\times y\times 1)\circ (x_k\times x_s^{\ast})=\delta_{is} (x_k\times y\times 1)$
\item $(y'\times x_j^{\ast})\circ (x_i\times y)=\deg(y'\cdot y) (x_i\times x_j^{\ast})$
\end{enumerate}
\end{lemme}
\begin{proof}We only compute (2) (other cases are similar).
\begin{align}
(x_i\times y\times 1)\circ (x_k\times x_s^{\ast})&=(^{\overline{X}}\!\!p^{\overline{Y}\times \overline{X}}_{\overline{X}})_{\ast}((^{\overline{X}\times\overline{X}}\!\!p_{\overline{Y}\times\overline{X}})^{\ast}(x_k\times x_s^{\ast})\cdot (p^{\overline{X}\times\overline{Y}\times \overline{X}}_{\overline{X}})^{\ast}(x_i\times y\times 1))\\
&= (^{\overline{X}}\!\!p^{\overline{Y}\times \overline{X}}_{\overline{X}})_{\ast}((x_k\times x_s^{\ast}\times 1\times 1)\cdot(1\times x_i\times y\times 1))\\
&= (^{\overline{X}}\!\!p^{\overline{Y}\times \overline{X}}_{\overline{X}})_{\ast}(x_k\times(x_s^{\ast}\cdot x_i)\times y\times 1)\\
&= \delta_{is}(x_k\times y\times 1)
\end{align}
\end{proof}

\section{Rational cycles of a geometrically split $F$-variety}

Let $X$ be a geometrically split $F$-variety. We note $(M(X),p)$ the direct summand of $M(X)$ associated with a projector $p\in CH_{\dim(X)}(X\times X;\Lambda)$. Considering the motive $M$ defined in the previous section, if $(M(X_E),p_E)$ is a direct summand of $M_E$ for some extension $E/F$, then there exists cycles $f\in CH(X_E\times Y_E;\Lambda)$ and $g\in CH(Y_E\times X_E;\Lambda)$ such that $f\circ g=p_E$. We can write these cycles in suitable basis of $CH(\overline{X}\times \overline{Y};\Lambda)$, $CH(\overline{Y}\times \overline{X};\Lambda)$ and $CH(\overline{X}\times \overline{X};\Lambda)$ by proposition \ref{64.3}. Thus there are two subsets $F$ and $G$ of $\{0,\dots,n\}$, scalars $f_i$, $g_j$, $p_{ij}$ and cycles $y_i$, $y'_j$ in $CH(\overline{Y};\Lambda)$ such that

\begin{enumerate}
\item $\overline{f}=\sum_{i\in F}f_i(x_i\times y_i)$
\item $\overline{g}=\sum_{j\in G}g_j(y'_j\times x^{\ast}_j)$
\item $\overline{p}=\sum_{i\in F}\sum_{j\in G}p_{ij}(x_i\times x_j^{\ast})$
\end{enumerate}

With $p_{ij}=f_ig_j\deg(y_j'\cdot y_i)$ by lemma \ref{calculs} as $g\circ f =p_E$.

\begin{notation}Let $p\in CH_{\dim(X)}(X\times X)$ be a non-zero projector. Considering $\overline{p}$, the image of $p$ in a splitting field of the $F$-variety $X$, we can write $\overline{p}=\sum_{i\in P_1}\sum_{j\in P_2}p_{ij}(x_i\times x_j^{\ast})$. We define the \emph{least codimension} of $p$ (denoted $cdmin(p)$) by
$$cdmin(p):=\min_{(i,j),~p_{ij}\neq 0}(\dim(\overline{X})-\varphi(i))$$
\end{notation}

\begin{proposition}\label{proj}Let $p\in CH_{\dim(X)}(X\times X)$ be a non-zero projector. We consider its decomposition $\overline{p}=\sum_{i\in P_1}\sum_{j\in P_2}p_{ij}(x_i\times x_j^{\ast})$ in a splitting field of $X$. Then for any $i\in P_1$ and $j\in P_2$ we have
$$p_{ij}=\sum_{k\in P_1\cap P_2} p_{kj}p_{ik}$$
\end{proposition}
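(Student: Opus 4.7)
The plan is to exploit the single fact that $p$ is a projector, namely $p\circ p=p$, together with the composition formula of Lemma \ref{calculs}(1). Passing to a splitting field, the identity $p\circ p=p$ becomes $\overline{p}\circ\overline{p}=\overline{p}$, and both sides live in the free $\Lambda$-module with basis $(x_a\times x_b^{\ast})$. Since the proposition is an equality of scalar coefficients, it suffices to expand the left-hand side of $\overline{p}\circ\overline{p}=\overline{p}$ and read off the coefficient of $(x_i\times x_j^{\ast})$.

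Concretely, I would write $\overline{p}\circ\overline{p}$ as the double sum
\[
\Bigl(\sum_{a\in P_1,\,b\in P_2}p_{ab}(x_a\times x_b^{\ast})\Bigr)\circ\Bigl(\sum_{k\in P_1,\,s\in P_2}p_{ks}(x_k\times x_s^{\ast})\Bigr)
\]
and apply Lemma \ref{calculs}(1) termwise, which yields $\delta_{as}(x_k\times x_b^{\ast})$ after composition. Summing over the Kronecker index reduces the quadruple sum to
\[
\overline{p}\circ\overline{p}=\sum_{k,b,a}p_{ab}p_{ka}(x_k\times x_b^{\ast}),
\]
where $a$ runs over the indices with $p_{ab}\neq 0$ and $p_{ka}\neq 0$. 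Identifying the coefficient of $(x_i\times x_j^{\ast})$ (set $k=i$, $b=j$, rename $a$ as $k$) gives exactly the announced identity.

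The last thing to justify is that the summation variable $k$ may be restricted to $P_1\cap P_2$. This is automatic from the definition of $P_1$ and $P_2$: the factor $p_{ik}$ vanishes unless $k\in P_2$, and the factor $p_{kj}$ vanishes unless $k\in P_1$, so the only nonzero contributions come from $k\in P_1\cap P_2$.

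There is no real obstacle here beyond careful bookkeeping of indices; the content of the proposition is just the matrix-multiplication identity $P^2=P$ read off in the basis $(x_i\times x_j^{\ast})$, and Lemma \ref{calculs}(1) is precisely what makes this basis behave like a basis of matrix units under composition.
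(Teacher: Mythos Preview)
Your proposal is correct and follows essentially the same approach as the paper: both expand $\overline{p}\circ\overline{p}$ using Lemma~\ref{calculs}(1), collapse the Kronecker delta, and compare coefficients against $\overline{p}$ in the basis $(x_i\times x_j^{\ast})$. The paper prefaces the computation with the remark that one may assume $\varphi(i)$ is constant on $P_1$, but this plays no visible role in the argument, and your explicit justification of why the inner index is restricted to $P_1\cap P_2$ is in fact cleaner than the paper's version.
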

\begin{proof}We can assume that $\varphi(i)$ is constant on $P_1$. Then a straightforward computation gives
\begin{align}\overline{p}\circ \overline{p}&=(\sum_{i\in P_1}\sum_{j\in P_2}p_{ij}(x_i\times x_j^{\ast}))\circ (\sum_{k\in P_1}\sum_{s\in P_2}p_{ij}(x_i\times x_j^{\ast}))\\
&= \sum_{i\in P_1}\sum_{j\in P_2}\sum_{k\in P_1}\sum_{s\in P_2} p_{ij}p_{ks}(x_i\times x_j^{\ast})\circ(x_k\times x_s^{\ast})\\
&= \sum_{i\in P_1}\sum_{j\in P_2}\sum_{k\in P_1}\sum_{s\in P_2} p_{ij}p_{ks}\delta_{is}(x_k\times x_j^{\ast})\\
&= \sum_{k\in P_1}\sum_{s\in P_2}\left(\sum_{i\in P_1\cap P_2}p_{ij}p_{ki}(x_k\times x_s^{\ast})\right)
\end{align}
Moreover $p\circ p=p$, thus if $(k,s)\in P_1\times P_2$ we have $p_{ks}=\sum_{i\in P_1\cap P_2} p_{is}p_{ki}$.

\end{proof}

\section{General properties of Chow groups}

Embedding the Chow group of the $F$-variety $X$ is quite usefull for computations, but the generalization of the theorem \ref{4.5} needs a direct construction of some $F$-rational cycles $f$ and $g$. We study in this section some properties of rationnal elements in Chow groups and how they behave when the extension $E(X)/F(X)$ is purely transcendental.

\begin{proposition}\label{trans}Let $Y$ be an $F$-varieties. Let $E/F$ be a purely transcendental extension. Then the morphism $$res_{E/F}:CH(Y;\Lambda)\longrightarrow CH(Y_E;\Lambda)$$ is an epimorphism. 
\end{proposition}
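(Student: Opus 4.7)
The plan is to reduce the statement to the case of a single transcendental variable $E=F(t)$ and then combine the homotopy invariance of Chow groups with the localization exact sequence.

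Since every purely transcendental extension is a filtered union of its finitely generated purely transcendental subextensions and $CH(Y_{E};\Lambda)$ is the corresponding filtered colimit of Chow groups along flat pullbacks, it suffices to treat a finitely generated purely transcendental extension $E=F(t_{1},\dots,t_{n})$. A straightforward induction on $n$ (at each step applying the result over the intermediate field $F(t_{1},\dots,t_{n-1})$ to the variety $Y_{F(t_{1},\dots,t_{n-1})}$) further reduces the problem to the case of a single transcendental variable $E=F(t)$.

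In that case I would view $Y_{F(t)}$ as the generic fibre of the projection $Y\times_{F}\mathbb{A}^{1}_{F}\to\mathbb{A}^{1}_{F}$. Writing $\mathrm{Spec}\,F(t)=\varprojlim_{U}U$ over the cofiltered system of non-empty open subschemes $U\subset\mathbb{A}^{1}_{F}$, one has $Y_{F(t)}=\varprojlim_{U}(Y\times_{F}U)$, hence
$$CH(Y_{F(t)};\Lambda)=\varinjlim_{U}CH(Y\times_{F}U;\Lambda).$$
Given $\alpha\in CH(Y_{E};\Lambda)$, pick a non-empty open $U\subset\mathbb{A}^{1}_{F}$ and a lift $\tilde{\alpha}\in CH(Y\times_{F}U;\Lambda)$ of $\alpha$. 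The localization exact sequence
$$CH\bigl(Y\times(\mathbb{A}^{1}_{F}\setminus U);\Lambda\bigr)\longrightarrow CH(Y\times_{F}\mathbb{A}^{1}_{F};\Lambda)\longrightarrow CH(Y\times_{F}U;\Lambda)\longrightarrow 0$$
provides a lift $\beta\in CH(Y\times_{F}\mathbb{A}^{1}_{F};\Lambda)$ of $\tilde{\alpha}$, and homotopy invariance $pr_{1}^{\ast}:CH(Y;\Lambda)\xrightarrow{\sim}CH(Y\times_{F}\mathbb{A}^{1}_{F};\Lambda)$ yields $\gamma\in CH(Y;\Lambda)$ with $pr_{1}^{\ast}\gamma=\beta$.

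To conclude I would observe that the composition of flat pullbacks $CH(Y;\Lambda)\to CH(Y\times\mathbb{A}^{1}_{F};\Lambda)\to CH(Y\times_{F}U;\Lambda)\to CH(Y_{E};\Lambda)$ is, by functoriality of the flat pullback, the pullback along the structural morphism $Y_{E}\to Y$, that is, the map $res_{E/F}$; therefore $res_{E/F}(\gamma)=\alpha$. The only mildly delicate point is the commutation of Chow groups with the cofiltered limit $\varprojlim_{U}Y\times_{F}U$ along flat open immersions, but this is standard for schemes of finite type over a noetherian base, so I do not anticipate any significant obstacle.
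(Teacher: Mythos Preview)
Your argument is correct and follows essentially the same route as the paper: factor $res_{E/F}$ as the homotopy-invariance isomorphism $CH(Y;\Lambda)\xrightarrow{\sim}CH(Y\times\mathbb{A}^n_F;\Lambda)$ followed by the surjection onto the Chow group of the generic fibre. The paper does this in one step for $\mathbb{A}^n_F$ and simply invokes \cite{EKM}, corollary~57.11, for the second surjection, whereas you reduce to $n=1$ by induction and unpack that surjection explicitly via the colimit description and localization; you also treat the infinitely generated case by a filtered colimit, which the paper's proof tacitly sidesteps by writing $E\simeq F(\mathbb{A}^n_F)$ for a finite $n$.
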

\begin{proof}Indeed the morphism $res_{E/F}$ coincides with the composition
$$CH(Y;\Lambda)\longrightarrow CH(Y\times \mathbb{A}^n_F;\Lambda)\longrightarrow CH(Y_E;\Lambda).$$
As the extension $E/F$ is purely transcendental, there is an isomorphism between $E$ and the function field of an affine space $\mathbb{A}^n_F$ for some integer $n$. The first map is an epimorphism by the homotopy invariance of Chow groups (\cite{EKM}, theorem 57.13) and the second map is an epimorphism as well (\cite{EKM}, corollary 57.11).
\end{proof}

\section{Generalization of the going-down theorem in the category of Chow-Grothendieck motives}

We now have all the material needed to prove the generalization of theorem \ref{4.5}.
\begin{theoreme}
Let $\Lambda$ be a finite commutative ring. Let $X$ be a geometrically split, geometically irreducible $F$-variety satisfying the nilpotence principle. Let also $M\in CM(F;\Lambda)$ be a motive. Suppose that an extension $E/F$ satisfies
\begin{enumerate}
\item the $E$-motive $(M(X)_E,p_E)$ associated with the $E$-variety $X_E$ is indecomposable;
\item the extension $E(X)/F(X)$ is purely transcendental;
\item the motive $(M(X_E),p_E)$ is a direct summand of the $E$-motive $M_E$.
\end{enumerate}
Then the motive $(M(X),p)$ is a direct summand of the motive $M$.
\end{theoreme}
\begin{proof}
We can consider that $M=(Y,q)$ for some smooth complete $F$-variety $Y$ and a projector $q\in CH_{\dim(Y)}(Y\times Y;\Lambda)$. If $p$ is equal to $0$ then the motive $(M(X),p)$ is the $0$ motive and $(M(X),p)$ is a direct summand of $M$. Now suppose that $p$ is not equal to $0$.

As $(M(X)_E,p_E)$ is a direct summand of $M_E$, there are $E$-rationnal cycles  $f\in CH_{\dim(X_E)}(X_E\times~Y_E;\Lambda)$ and $g\in CH_{\dim(Y_E)}(Y_E\times~X_E;\Lambda)$ such that $g\circ f=p_E$. We can decompose the images of these cycles in a splitting field of $X$ in suitable basis for computations :
\begin{enumerate}
\item $\overline{f}=\sum_{i\in F}f_i(x_i\times y_i)$
\item $\overline{g}=\sum_{j\in G}g_j(y'_j\times x^{\ast}_j)$
\item $\overline{p}=\sum_{i\in F}\sum_{j\in G}p_{ij}(x_i\times x_j^{\ast})$
\end{enumerate}
with $p_{ij}=f_ig_j\deg(y'_j\cdot y_i)$. 

Splitting terms whose first codimension is minimal in $\overline{f}$ and $\overline{p}$ by introducing $$F_1:=\{i\in F,~\varphi(i)=cdmin(p)\}$$ we get
\begin{enumerate}
\item $\overline{f}=\sum_{i\in F_1}f_i(x_i\times y_i)+\sum_{i\in F\setminus F_1}f_i(x_i\times y_i)$
\item $\overline{p}=\sum_{i\in F_1}\sum_{j\in G}p_{ij}(x_i\times x_j^{\ast})+\sum_{i\in F\setminus F_1}\sum_{j\in G}p_{ij}(x_i\times x_j^{\ast})$
\end{enumerate}

As $E(X)$ is an extension of $E$, the cycle $\overline{f}$ is $E(X)$-rational. Proposition \ref{trans} implies that the change of field $res_{E(X)/F(X)}$ is an epimorphism, hence $\overline{f}$ is an $F(X)$-rational cycle.

Considering the morphism $Spec(F(X))\longrightarrow X$ associated with the generic point of the geometrically irreducible variety $X$, we get a morphism 
$$\epsilon:(X\times Y)_{F(X)}\longrightarrow X\times Y\times X$$
This morphism induces a pull-back $\epsilon^{\ast}:CH_{\dim(X)}(\overline{X}\times \overline{Y}\times\overline{X};\Lambda)\longrightarrow CH_{\dim(X)}(\overline{X}\times \overline{Y};\Lambda)$
mapping any cycle of the form $\alpha\times \beta\times 1$ on $\alpha\times \beta$ and vanishing on elements $\alpha\times \beta\times\gamma$ if codim($\gamma$)$>0$. Moreover $\epsilon^{\ast}$ induces an epimorphism of $F$-rational cycles onto $F(X)$-rational cycles (\cite{EKM}, corollary 57.11). We can thus choose a $F$-rational cycle $f_1\in CH_{\dim(X)}(\overline{X}\times\overline{Y}\times\overline{X};\Lambda)$ such that $\epsilon^{\ast}(f_1)=\overline{f}$
.

By the expression of the pull-back $\epsilon^{\ast}$ we can assume 
$$\overline{f_1}=\sum_{i\in F_1}f_i(x_i\times y_i\times 1)+\sum_{i\in F\setminus F_1}f_i(x_i\times y_i\times 1)+\sum(\alpha\times \beta\times \gamma)$$
where the codimension of the cycles $\gamma$ is non-zero.

Considering $f_1$ as a correspondance from $\overline{X}$ to $\overline{X}\times\overline{Y}$, we consider $f_2:=f_1\circ p$ which is also a $F$-rational cycle. We have
\begin{align}\overline{f_2}&= (\sum_{i\in F_1}f_i(x_i\times y_i\times 1))\circ(\sum_{i\in F_1}\sum_{j\in G}p_{ij}(x_i\times x_j^{\ast}))+\sum_{i\in F\setminus F_1}\sum_{j\in G} \lambda_{ij} (x_i\times y_j\times 1)+\sum \tilde{\alpha}\times\tilde{\beta}\times\tilde{\gamma}\\
&= \sum_{i\in F_1}\sum_{j\in F_1\cap G}f_jp_{ij}(x_i\times y_j\times 1)+\sum_{i\in F\setminus F_1}\sum_{j\in G} \lambda_{ij} (x_i\times y_j\times 1)+\sum \tilde{\alpha}\times\tilde{\beta}\times\tilde{\gamma}
\end{align}
where the cycles $\tilde{\gamma}$ are of non-zero codimension, the cycles $\tilde{\alpha}$ are such that $codim(\tilde{\alpha})\geq~cdmin(p)$ and where elements $\lambda_{ij}$ are scalars.

We now consider the diagonal embedding
$$\fonction{\Delta}{\overline{X}\times \overline{Y}}{\overline{X}\times \overline{Y}\times \overline{X}}{(x,y)}{(x,y,x)}$$

The morphism $\Delta$ induces a pull-back $\Delta^{\ast}:CH_{\dim(X)}(\overline{X}\times\overline{Y}\times\overline{X};\Lambda)\longrightarrow CH_{\dim(X)}(\overline{X}\times \overline{Y};\Lambda)$

We note $f_3:=\Delta^{\ast}(f_2)$, which is also a $F$-rational cycle and whose expression in a splitting field of $X$ is
$$f_3=\sum_{i\in F_1}\sum_{j\in F_1\cap G}f_jp_{ij}(x_i\times y_j)+\sum_{i\in F\setminus F_1}\sum_{j\in G} \lambda_{ij} (x_i\times y_j)+\sum (\tilde{\alpha}\cdot\tilde{\gamma})\times\tilde{\beta}$$
where $codim(\tilde{\alpha}\cdot\tilde{\gamma})>cdmin(p)$ as $codim(\tilde{\alpha})\geq cdmin(p)$ and $codim(\tilde{\gamma})>0$.

We can compute the $g\circ f_3$:

\begin{align}\overline{g}\circ \overline{f_3}&= \overline{g}\circ (\sum_{i\in F_1}\sum_{j\in G}f_jp_{ij}(x_i\times y_j))+ \overline{g}\circ (\sum_{i\in F\setminus F_1}\sum_{j\in G} \lambda_{ij} (x_i\times y_j))+\overline{g}\circ(\sum (\tilde{\alpha}\cdot\tilde{\gamma})\times\tilde{\beta}))\\
&= \sum_{i\in F_1}\sum_{s\in G}\sum_{j\in F_1\cap G}g_sf_jp_{ij}(y'_s\times x_s^{\ast})\circ(x_i\times y)+(\sum\overline{\alpha}\times \overline{\beta})
\end{align}
With cycles $\overline{\alpha}$ such that $codim(\overline{\alpha})>cdmin(p)$. 
Computing the component of $g\circ f_3$ for elements of the form $x_k\times x_s^{\ast}$ with $\varphi(k)=cdmin(p)$ we get

\begin{align}\overline{g}\circ \overline{f_3}&= \sum_{i\in F_1}\sum_{s\in G}\sum_{j\in F_1\cap G}g_sf_jp_{ij}(y'_s\times x_s^{\ast})\circ(x_i\times y_j)+(\sum\overline{\alpha}\times \overline{\beta})\\
&=  \sum_{i\in F_1}\sum_{s\in G}\sum_{j\in F_1\cap G}g_sf_jp_{ij}\deg(y'_s\cdot y_j)(x_i\times x_s^{\ast})
\end{align}

Now we can see that if $k\in F_1$, then the coefficient of $g\circ f_3$ relatively to an element $x_k\times x_s^{\ast}$ is equal to $\sum_{i\in F_1\cap G}g_{s}f_ip_{ki}\deg(y_i\cdot y'_s)$. Moreover proposition \ref{proj} says that
$$\sum_{i\in F_1\cap G}g_{s}f_ip_{ki}\deg(y_i\cdot y'_s)=\sum_{i\in F_1\cap G}p_{is}p_{ki}=p_{ks}$$

Since $p$ is non-zero, there exists $(k,s)$ with $k\in F_1$ and $p_{ks}\neq 0$, thus we have shown that the cycle $g\circ f_3$ as a decomposition
$$g\circ f_3= p_{ks}(x_k\times x_s^{\ast})+\sum_{(i,j)\neq (k,s)}p_{ij}(x_i\times x_j^{\ast})+\sum(\overline{\alpha}\circ\overline{\beta})$$
where $codim(\overline{\alpha})>cdmin(p)$. Since $p$ is a projector, for any integer $n$ the $n$-th power of $g\circ f_3$ as always a non-zero component relatively to $x_k\times x_s^{\ast}$ which is equal to $p_{ks}$, that is to say
$$\forall n\in \mathbb{N},~(g\circ f_3)^{\circ n}=p_{ks}(x_k\times x_s^{\ast})+\sum_{(i,j)\neq (k,s)}p_{ij}(x_i\times x_j^{\ast})+\sum(\overline{\alpha}\circ\overline{\beta})$$
where $codim(\overline{\alpha})>cdmin(p)$.

As the ring $\Lambda$ is finite, there is a power of $g\circ (f_3)_E$ which is a non-zero idempotent (cf \cite{hhfba} lemma 3.2). Since the $E$-motive $(M(X)_E,p_E)$ is indecomposable this power of $g\circ (f_3)_E$ is equal to $p_E$. Thus we have shown that there exists an integer $n_1$ such that
$$(g\circ (f_3)_E)^{\circ n_1}=p_E$$
In particular if $g_1:=(g\circ (f_3)_E)^{\circ n_1-1}\circ g$ we get $g_1\circ (f_3)_E=p_E$.

Now we can transpose the last equality and get
$$^t(f_3)_E\circ ^t\!\!g_1=^t\!\!p_E.$$
Repeating the same process as before, we get a $F$-rational cycle $\tilde{g}$ and an integer $n_2$ such that
$$(^t(f_3)_E\circ (\tilde{g})_E)^{\circ n_2}=^t\!\!p_E$$
Now setting $\hat{g}:=(^t\!\!\tilde{g}\circ (f_3))^{\circ n_2-1}\circ ^t\!\!\tilde{g}$, we have two $F$-rational cycles $\hat{g}$ and $f_3$ such that
$$\hat{g}_E\circ (f_3)_E=p_E$$

Using the nilpotence principle again, there is an integer $\overline{n}\in \mathbb{N}$ such that
$$(\hat{g}\circ f_3)^{\overline{n}}=p$$
Hence if $\hat{f}:=f_3\circ(\hat{g}\circ f_3)^{\overline{n}-1}$, $\hat{f}$ is a $F$-rational cycle satisfying
$$\hat{g}\circ\hat{f}=p$$
Thus we have shown that the motive $(M(X),p)$ is a direct summand of the motive $M$.
\end{proof}

\bibliographystyle{plain}
\bibliography{madanipubli}

\begin{thebibliography}{1}

\bibitem{EKM}
R.~Elman, N.~Karpenko, and A.~Merkurjev.
\newblock {\em {The Algebraic and Geometric Theory of Quadratic Forms}}.
\newblock American Mathematical Society, 2008.

\bibitem{hhfba}
N.~Karpenko.
\newblock {\em {Hyperbolicity of hermitian forms over biquaternion algebras}}.
\newblock 2009.

\end{thebibliography}
\addcontentsline{toc}{section}{Références}

\end{document}